\date{}
\renewcommand{\uppercasenonmath}[1]{}
\theoremstyle{plain}
\newtheorem{theorem}{Theorem}
\newtheorem{proposition}[theorem]{Proposition}
\newtheorem{lemma}[theorem]{Lemma}
\newtheorem{example}[theorem]{Example}
\newtheorem*{open question}{Open Question}
\theoremstyle{definition}
\theoremstyle{remark}
\newtheorem{remark}[theorem]{Remark}
\begin{document}
\begin{center}
{\large  \bf A note on $S$-coherent rings}

\vspace{0.5cm}    Xiaolei Zhang$^{a}$

{\footnotesize a.\  School of Mathematics and Statistics, Shandong University of Technology, Zibo 255000, China\\

E-mail: zxlrghj@163.com\\}
\end{center}

\bigskip
\centerline { \bf  Abstract}
\bigskip
\leftskip10truemm \rightskip10truemm \noindent

In this note, we show that  a ring $R$ is $S$-coherent if and only if every finitely
presented $R$-module is $S$-coherent,  providing a positive answer to a question proposed in [D. Bennis, M. El Hajoui, {\it On $S$-coherence}, J. Korean Math. Soc. \textbf{55} (2018), no.6, 1499-1512]. Besides, we show that  $c$-$S$-coherent rings are $S$-coherent, and give an example to show the converse is not true in general.
\vbox to 0.3cm{}\\
{\it Key Words:} $S$-coherent ring, $S$-coherent  module, $c$-$S$-coherent ring.\\
{\it 2020 Mathematics Subject Classification:} 13E05.

\leftskip0truemm \rightskip0truemm
\bigskip

Throughout this paper, $R$ is a commutative ring with identity and all modules are unitary. $S$ will always denote a multiplicative closed set of $R$. Let $M_1, M_2$ be two subsets of an $R$-module $M$. Set $(M_1:_RM_2)=\{r\in R\mid rM_2\subseteq M_1\}.$  For a subset $U$ of  an $R$-module $M$, denote by $\langle U\rangle$ the submodule of $M$ generated by $U$.

 In 2002, Anderson and Dumitrescu \cite{ad02} introduced the notions of $S$-finite modules and $S$-Noetherian rings. An $R$-module $M$ is called \emph{$S$-finite} provided that there exists a finitely generated submodule $N$ of $M$ such that $$sM\subseteq N\subseteq M$$ for some $s\in S$. And a ring $R$ is called an \emph{$S$-Noetherian ring} if every ideal of $R$ is $S$-finite. Some classical results, such as Cohen's Theorem, Eakin-Nagata Theorem and Hilbert Basis Theorem, were investigated for $S$-Noetherian rings \cite{ad02}.

Coherent rings are important generalizations of Noetherian rings. In 2018, Bennis and Hajoui \cite{bh18} introduced the notions of $S$-finitely presented modules and $S$-coherent modules which can be seen as $S$-versions of finitely presented modules and coherent modules. An $R$-module $M$ is said to be \emph{$S$-finitely presented} provided  there exists
an exact sequence of $R$-modules $$0 \rightarrow K\rightarrow F\rightarrow M\rightarrow 0,$$ where $K$ is
$S$-finite and $F$ is  finitely generated free. An $R$-module $M$ is said to be \emph{$S$-coherent} if it is finitely generated and every finitely generated submodule of $M$ is $S$-finitely presented. A ring $R$ is called an \emph{$S$-coherent ring} if $R$ itself is an $S$-coherent $R$-module, that is, every finitely generated ideal of $R$ is $S$-finitely presented.

The authors \cite{bh18} obtained an $S$-version of Chase's result \cite[Theorem 2.2]{c} for $S$-coherent rings as bellow.
 \begin{theorem}\label{B S-chase}
\textbf{ \cite[Theorem 3.8]{bh18}} Let $R$ be an ring and $S$  a multiplicative closed set of $R$.
Then the following assertions are equivalent:
\begin{enumerate}
    \item $R$ is an $S$-coherent ring;
    \item $(I:_Ra)$ is an $S$-finite ideal of $R$, for every finitely generated ideal $I$ of $R$ and $a\in R$;
     \item  $(0:_Ra)$ is an $S$-finite ideal of $R$ for every $a\in R$ and the intersection of  two finitely generated ideals of $R$ is an $S$-finite ideal of $R$.
\end{enumerate}
\end{theorem}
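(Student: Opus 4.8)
The plan is to transcribe the classical proof of Chase's theorem for the ideal-theoretic characterisations of coherence into the $S$-setting, replacing ``finitely generated'' by ``$S$-finite'' and ``finitely presented'' by ``$S$-finitely presented'' at the appropriate places. The only genuine preliminary work is to record the $S$-analogues of the standard lemmas on a short exact sequence $0\to A\to B\to C\to 0$ of $R$-modules, each of which is either already in \cite{bh18} or obtained from the classical argument by inserting one extra factor coming from $S$:
\begin{enumerate}
\item[(i)] a quotient of an $S$-finite module is $S$-finite, and if $A$ and $C$ are $S$-finite then so is $B$ (clear denominators: if $sA\subseteq\langle a_1,\dots,a_m\rangle$ and $tC\subseteq\langle\overline{b_1},\dots,\overline{b_n}\rangle$ with $b_i\in B$ lifting the $\overline{b_i}$, then $stB\subseteq\langle b_1,\dots,b_n,a_1,\dots,a_m\rangle$ and $st\in S$);
\item[(ii)] if $A$ and $C$ are $S$-finitely presented then so is $B$ (horseshoe lemma together with (i));
\item[(iii)] if $B$ is finitely generated and $C$ is $S$-finitely presented, then $A$ is $S$-finite; equivalently (via an $S$-version of Schanuel's lemma, itself a consequence of (i)), the property of being $S$-finitely presented may be tested on \emph{any} presentation by a finitely generated free module;
\item[(iv)] a quotient of an $S$-finitely presented module by an $S$-finite submodule is $S$-finitely presented.
\end{enumerate}
For (iii) I would take a presentation $0\to K\to F\to C\to 0$ with $F$ finitely generated free and $K$ $S$-finite, form the pullback $P=B\times_C F$, and read off the exact sequences $0\to K\to P\to B\to 0$ and $0\to A\to P\to F\to 0$; the second splits since $F$ is free, so $P\cong A\oplus F$, whence $P$ is $S$-finite by (i) and its quotient $A$ is $S$-finite by (i) again. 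For (iv), if $A\subseteq B$ with $B$ $S$-finitely presented via $0\to K\to F\xrightarrow{p}B\to 0$ and $A$ is $S$-finite, then $p^{-1}(A)$ is $S$-finite by (i) and $0\to p^{-1}(A)\to F\to B/A\to 0$ exhibits $B/A$ as $S$-finitely presented.

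With these tools the implications closing the cycle $(1)\Rightarrow(2)\Rightarrow(1)$ are routine. For $(1)\Rightarrow(2)$: given a finitely generated ideal $I$ and $a\in R$, the ideal $I+Ra$ is finitely generated, hence $S$-finitely presented by $(1)$, so $R/(I:_Ra)\cong(I+Ra)/I$ is $S$-finitely presented by (iv), and applying (iii) to $0\to(I:_Ra)\to R\to R/(I:_Ra)\to 0$ gives that $(I:_Ra)$ is $S$-finite. For $(2)\Rightarrow(1)$: argue by induction on the number of generators of a finitely generated ideal $I$; the case $I=Ra$ follows from $Ra\cong R/(0:_Ra)$ since $(0:_Ra)$ is $S$-finite by $(2)$ (applied with the finitely generated ideal equal to $0$), and if $I=I'+Ra$ with $I'$ finitely generated, then $I'$ is $S$-finitely presented by induction, $(I':_Ra)$ is $S$-finite by $(2)$, so $I/I'\cong R/(I':_Ra)$ is $S$-finitely presented, and hence $I$ is $S$-finitely presented by (ii).

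For $(1)\Rightarrow(3)$: the sequence $0\to(0:_Ra)\to R\xrightarrow{\cdot a}Ra\to 0$ has $Ra$ finitely generated, hence $S$-finitely presented by $(1)$, so $(0:_Ra)$ is $S$-finite by (iii); and for finitely generated ideals $I,J$ the sequence $0\to I\cap J\to I\oplus J\to I+J\to 0$ has $I+J$ finitely generated, hence $S$-finitely presented by $(1)$, so $I\cap J$ is $S$-finite by (iii). For $(3)\Rightarrow(1)$: again induct on the number of generators; $Ra\cong R/(0:_Ra)$ is $S$-finitely presented since $(0:_Ra)$ is $S$-finite by $(3)$, and if $I=I'+Ra$ with $I'$ $S$-finitely presented by induction, then restricting $R\xrightarrow{\cdot a}Ra$ yields $0\to(0:_Ra)\to(I':_Ra)\to I'\cap Ra\to 0$ with both ends $S$-finite by $(3)$, so $(I':_Ra)$ is $S$-finite by (i), whence $I/I'\cong R/(I':_Ra)$ is $S$-finitely presented and $I$ is $S$-finitely presented by (ii). This gives all the asserted equivalences.

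The main obstacle is concentrated in item (iii) and the $S$-Schanuel statement it packages: this is the one place where one must genuinely exploit that denominators may be cleared by an element of $S$, and the real subtlety is bookkeeping, namely ensuring that the element of $S$ which finally appears is uniform over a chosen finite generating set. Once (i)--(iv) are in place, the remainder is a formal rerun of Chase's original argument and contains no further surprises.
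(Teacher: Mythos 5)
The paper does not prove this theorem at all: it is quoted verbatim from Bennis--El Hajoui \cite[Theorem 3.8]{bh18}, so there is no internal proof to compare against. Your argument is the standard Chase-style cycle transported to the $S$-setting, and it checks out: the preliminary facts (i)--(iv) are the correct $S$-analogues (they recover Lemma \ref{exac} of the paper, with the pullback/splitting argument for (iii) doing the work of an $S$-Schanuel lemma), and each implication $(1)\Rightarrow(2)\Rightarrow(1)$ and $(1)\Rightarrow(3)\Rightarrow(1)$ is carried out correctly, including the key exact sequences $0\to(0:_Ra)\to(I':_Ra)\to I'\cap Ra\to 0$ and $0\to I\cap J\to I\oplus J\to I+J\to 0$ that the present paper itself reuses later. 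This is essentially the same route as the cited source, so nothing further is needed.
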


Subsequently, they asked of an
$S$-version of flatness that characterizes $S$-coherent rings similarly to the classical case. Recently, the author, Qi and Zhao \cite{qwz23} gave an $S$-version of Chase Theorem in terms of $S$-flat modules, i.e., modules $M$ satisfying $M_S$ is a flat $R_S$-module.

 \begin{theorem}\cite[Theorem 4.4]{qwz23} Let $R$ be an ring and $S$  a multiplicative closed set of $R$. Then the following assertions are equivalent:
\begin{enumerate}
    \item $R$ is an $S$-coherent ring;
    \item any product of flat $R$-modules is $S$-flat;
       \item any product of  projective $R$-modules is $S$-flat;
        \item any product of copies of  $R$ is $S$-flat.
\end{enumerate}
\end{theorem}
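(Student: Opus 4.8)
The plan is to run Chase's classical argument \cite{c} with the multiplicative set $S$ carried through, using the uniform bound built into the definition of an $S$-finite module. The implications $(2)\Rightarrow(3)\Rightarrow(4)$ are immediate, since projective modules are flat and $R$ is projective over itself, so the whole content is $(1)\Rightarrow(2)$ and $(4)\Rightarrow(1)$. First I would record the working criterion: as localization at $S$ is exact and commutes with $\Tor$, and every finitely generated ideal of $R_S$ has the form $\mathfrak{a}_S$ for some finitely generated ideal $\mathfrak a$ of $R$, an $R$-module $M$ is $S$-flat if and only if $\Tor_1^R(R/\mathfrak a,M)$ is $S$-torsion for every finitely generated ideal $\mathfrak a$ of $R$. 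Next, fixing $\mathfrak a=(a_1,\dots,a_n)$, the map $\phi\colon R^n\to\mathfrak a$ with $\phi(e_j)=a_j$, and $K=\ker\phi$, I would identify for an arbitrary module $M$
\[
\Tor_1^R(R/\mathfrak a,M)\;\cong\;Z/B,\qquad Z=\Big\{(m_j)\in M^n:\textstyle\sum_j a_jm_j=0\Big\},\quad B=\im\big(K\otimes_RM\to M^n\big),
\]
which follows by applying right-exactness of $\otimes_R M$ to $K\to R^n\to\mathfrak a\to 0$ and then the long exact sequence of $0\to\mathfrak a\to R\to R/\mathfrak a\to 0$.

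For $(1)\Rightarrow(2)$, let $\{F_i\}_{i\in I}$ be flat and $M=\prod_iF_i$. Since $R$ is $S$-coherent, $\mathfrak a$ is $S$-finitely presented, and (independently of the chosen finite free presentation) this forces $K$ to be $S$-finite: there are $s\in S$ and a finitely generated $K_0=\langle k_1,\dots,k_m\rangle$ with $sK\subseteq K_0\subseteq K$. Because products are exact, $Z=\prod_iZ_i$ with $Z_i=\ker(F_i^n\to F_i)$, and flatness of $F_i$ gives $Z_i=\im(K\otimes_RF_i\to F_i^n)$. Writing a typical $z_i\in Z_i$ as a finite sum $\sum_\lambda k^{(i)}_\lambda f^{(i)}_\lambda$ and using $sk^{(i)}_\lambda\in K_0$ to re-expand $sz_i$ over the fixed generators $k_1,\dots,k_m$, one gets $sz\in\sum_{p=1}^m k_pM\subseteq B$ for every $z\in Z$, so $s\cdot(Z/B)=0$. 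Hence $\Tor_1^R(R/\mathfrak a,M)$ is $S$-torsion for every finitely generated $\mathfrak a$, i.e.\ $M$ is $S$-flat.

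For $(4)\Rightarrow(1)$, apply the hypothesis to $M=R^{K}$ and let $\xi=(\xi_1,\dots,\xi_n)\in(R^{K})^n=R^n\otimes_RR^{K}$, where $\xi_j\colon K\to R$ is the $j$-th coordinate function; then $\sum_ja_j\xi_j$ is the function $k\mapsto\phi(k)=0$, so $\xi\in Z$. Since $M$ is $S$-flat, $Z/B$ is $S$-torsion, so $s\xi\in B$ for some $s\in S$; writing $s\xi=\sum_{p=1}^m k^{(p)}\mu_p$ with $k^{(p)}\in K$, $\mu_p\in R^{K}$, and evaluating the $j$-th coordinate at an arbitrary $k\in K$ yields $sk=\sum_p\mu_p(k)\,k^{(p)}$, so $sK\subseteq\langle k^{(1)},\dots,k^{(m)}\rangle\subseteq K$. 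Thus $K$ is $S$-finite, $\mathfrak a$ is $S$-finitely presented, and $R$ is $S$-coherent.

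The hard part will be purely the bookkeeping: pinning the isomorphism $\Tor_1^R(R/\mathfrak a,M)\cong Z/B$ down concretely enough that the distinguished element $\xi$ (and the uniform annihilator $s$) can be tracked through it, and being careful to interchange products with kernels and images only where products are genuinely exact. In particular $K\otimes_R(\prod_iF_i)\to\prod_i(K\otimes_RF_i)$ need not be an isomorphism when $K$ is merely finitely generated, and this is precisely why the uniform $s$ coming from $S$-finiteness — rather than finiteness — of the syzygy module $K$ is what makes $(1)\Rightarrow(2)$ go through.
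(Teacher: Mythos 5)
Your proof is correct. The paper only cites this result from \cite{qwz23} without reproducing a proof, and your argument is exactly the expected one: Chase's classical product-of-flats argument with the syzygy module $K$ of a finitely generated ideal tracked through localization at $S$, the uniform element $s\in S$ from $S$-finiteness of $K$ replacing finiteness in $(1)\Rightarrow(2)$, and the coordinate functions $\xi_j\in R^K$ recovering $S$-finiteness of $K$ in $(4)\Rightarrow(1)$; the Schanuel-type observation that $S$-finiteness of the syzygy is independent of the chosen finite free presentation is the right way to close the one gap that needed closing.
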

It is well-known that a ring $R$ is coherent if and only if every finitely
presented $R$-module is coherent (see \cite[Theorem 2.3.2]{g}).  
Immediately after the question of $S$-version of Chase Theorem, Bennis and Hajoui \cite{bh18} also left the $S$-version of the  result \cite[Theorem 2.3.2]{g}  as an open question.
\begin{open question}
Does the condition
``a ring $R$ is $S$-coherent'' imply $($and then equivalent to$)$ the condition ``every finitely
presented $R$-module is $S$-coherent''?
\end{open question}

In this note, we will give an positive answer to this question.

\begin{theorem}$(=${\bf Theorem \ref{main}}$)$ A ring $R$ is an $S$-coherent ring if and only if every finitely presented $R$-module is $S$-coherent.
\end{theorem}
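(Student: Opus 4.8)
\emph{Proof plan.} The ``only if'' direction is immediate: $R\cong R^{1}/0$ is finitely presented, so if every finitely presented module is $S$-coherent then $R$ is. For the converse, assume $R$ is $S$-coherent and let $M$ be finitely presented, say $0\to K\to R^{n}\to M\to 0$ with $K$ finitely generated. As $M$ is finitely generated, it suffices to prove that each finitely generated submodule $L\subseteq M$ is $S$-finitely presented. Let $\widetilde{L}\subseteq R^{n}$ be the preimage of $L$; then $K\subseteq\widetilde{L}$, $\widetilde{L}/K\cong L$, and $\widetilde{L}$ is finitely generated (an extension of $L$ by $K$). Hence the whole theorem reduces to the claim, to be proved by induction on $n$, that \emph{if $R$ is $S$-coherent then every finitely generated submodule of $R^{n}$ is $S$-finitely presented}: granting it, $\widetilde{L}$ is $S$-finitely presented, and from $0\to K\to\widetilde{L}\to L\to 0$ with $\widetilde{L}$ $S$-finitely presented and $K$ finitely generated (so $S$-finite) one concludes that $L$ is $S$-finitely presented, using that a quotient of an $S$-finitely presented module by an $S$-finite submodule is $S$-finitely presented (routine; cf.\ \cite{bh18}).

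Before the induction I would record a handful of elementary facts about $S$-finite and $S$-finitely presented modules, all easy from the definitions and essentially in \cite{bh18}: finite direct sums and extensions of $S$-finite modules are $S$-finite; homomorphic images (in particular direct summands) of $S$-finite modules are $S$-finite; if a module $X$ has an $S$-finite submodule $X'$ with $tX\subseteq X'$ for some $t\in S$, then $X$ is $S$-finite; by Schanuel's lemma together with the previous items, if $M$ is $S$-finitely presented then the kernel of \emph{every} epimorphism $F\twoheadrightarrow M$ with $F$ finitely generated free is $S$-finite; and consequently, if $0\to A\to B\to C\to 0$ is exact with $B$ finitely generated and $C$ $S$-finitely presented, then $A$ is $S$-finite (choose $R^{m}\twoheadrightarrow B\twoheadrightarrow C$; its kernel is $S$-finite and surjects onto $A$).

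For the inductive step ($n=1$ being the hypothesis on $R$), write $R^{n+1}=R^{n}\oplus R$ and take $N=\langle v_{1},\dots,v_{m}\rangle\subseteq R^{n+1}$, $v_{j}=(w_{j},a_{j})$ with $w_{j}\in R^{n}$, $a_{j}\in R$. The ideal $I=\langle a_{1},\dots,a_{m}\rangle$ is finitely generated, hence $S$-finitely presented since $R$ is $S$-coherent, so $Z:=\ker(R^{m}\to I)$ (the syzygy module of $a_{1},\dots,a_{m}$) is $S$-finite; fix a finitely generated $Z_{0}\subseteq Z$ and $t\in S$ with $tZ\subseteq Z_{0}$. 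Let $\phi\colon Z\to R^{n}$, $r\mapsto\sum_{j}r_{j}w_{j}$; then $\ker\phi=\ker(R^{m}\to N)=:W$, so $N\cong R^{m}/W$ and it remains to see that $W$ is $S$-finite. Now $\phi(Z_{0})$ is a finitely generated submodule of $R^{n}$, hence $S$-finitely presented by the induction hypothesis, so the last fact above applied to $0\to W\cap Z_{0}\to Z_{0}\to\phi(Z_{0})\to 0$ gives that $W\cap Z_{0}$ is $S$-finite; since $tW\subseteq tZ\subseteq Z_{0}$ and $tW\subseteq W$, we get $tW\subseteq W\cap Z_{0}$, and therefore $W$ is $S$-finite. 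This completes the induction.

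The genuinely new point is this inductive step: the syzygy module $Z$ of a finitely generated ideal is in general only $S$-finite, not $S$-finitely presented and possibly not even finitely generated, yet one can still deduce $S$-finiteness of the larger syzygy module $W$ by replacing $Z$ with a finitely generated submodule $Z_{0}$ that captures it ``up to $S$-torsion'' and then invoking the induction hypothesis over $R^{n}$. Everything else is routine bookkeeping with the multiplicative set, notably the verification, via Schanuel, that $S$-finite presentation does not depend on the chosen finite free presentation; one could instead insert the Chase-type criterion of Theorem~\ref{B S-chase} where needed, but the syzygy computation above seems the cleanest route.
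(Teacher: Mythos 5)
Your proof is correct, and although the outer reduction coincides with the paper's (pass to the preimage $\widetilde{L}$ of $L$ in the free module, note it is finitely generated, and quotient by the $S$-finite kernel $K$, so that everything rests on showing that finitely generated submodules of $R^{n}$ are $S$-finitely presented), your proof of that key claim is genuinely different. The paper's Proposition \ref{freescoh} runs a double induction, on the number of generators of the submodule and on $n$, reducing each step to the colon-ideal and intersection criteria of Theorem \ref{B S-chase} together with Lemma \ref{cap}; in particular it relies on identities of the form $(M_k:_Rm_{k+1})=\bigcap_i(M_ke_i:_Rx_i)$ relating a colon in $R^{n}$ to colons of component ideals. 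You instead run a single induction on $n$ and locate the kernel $W$ of $R^{m}\to N$ inside the syzygy module $Z$ of the ideal $I$ of last coordinates: $Z$ is $S$-finite because $I$ is $S$-finitely presented, you replace $Z$ by a finitely generated $Z_{0}$ with $tZ\subseteq Z_{0}$, apply the induction hypothesis to $\phi(Z_{0})\subseteq R^{n}$ to get that $W\cap Z_{0}$ is $S$-finite, and recover $W$ from $tW\subseteq W\cap Z_{0}$. This buys two things: you use only the definition of $S$-coherence of $R$ (no Chase-type characterization, no Lemma \ref{cap}), and you avoid the componentwise colon computation, which for a general submodule $M_k\subseteq R^{n}$ (as opposed to the annihilator case $M_k=0$, where the intersection formula is clearly valid) is the delicate point of the paper's induction -- so your route is not only different but arguably more robust. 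The supporting facts you list (Schanuel-type independence of the chosen finite free presentation, and the two ``kernel is $S$-finite'' statements) are all correct and are in substance instances of Lemma \ref{exac}. The only cost is a slightly less elementary flavor: explicit ideal arithmetic is traded for the $Z_{0}$-approximation trick.
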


We first recall some $S$-finitely presented properties on exact sequences.

\begin{lemma}\label{exac} \cite[Theorem 2.4]{bh18} Let $0 \rightarrow M' \rightarrow M \rightarrow M''\rightarrow 0$ be an exact sequence of	$R$-modules. The following assertions hold:
	\begin{enumerate}
		\item If $M'$ and $M''$ are $S$-finitely presented, then $M$ is $S$-finitely presented;
		In particular, every finite direct sum of $S$-finitely presented modules is
		$S$-finitely presented;
		\item If $M'$ is $S$-finite and $M$ is $S$-finitely presented, then  $M''$ is $S$-finitely
		presented;
		\item If $M''$ is $S$-finitely presented and $M$ is $S$-finite, then $M'$ is $S$-finite.
	\end{enumerate}
\end{lemma}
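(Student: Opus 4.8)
The plan is to isolate a single engine --- the closure of $S$-finiteness under extensions --- and then derive all three assertions from it, together with the trivial facts that a homomorphic image of an $S$-finite module is $S$-finite and that a finite direct sum of $S$-finite (resp. $S$-finitely presented) modules is again of that type. Concretely, I would first prove the following extension lemma: \emph{if $0\to A\to B\to C\to 0$ is exact with $A$ and $C$ both $S$-finite, then $B$ is $S$-finite.} The argument is a ``multiply by the product of the two witnesses'' computation: choose $s_A\in S$ and a finitely generated $N_A\subseteq A$ with $s_AA\subseteq N_A$, and $s_C\in S$ and a finitely generated $N_C\subseteq C$ with $s_CC\subseteq N_C$; lift a finite generating set of $N_C$ to a finitely generated $\widetilde N_C\subseteq B$, and check that $s_As_C B\subseteq N_A+\widetilde N_C$ (for $b\in B$ one has $s_Cb-w\in A$ for some $w\in\widetilde N_C$, whence $s_As_Cb=s_Aw+s_A(s_Cb-w)\in\widetilde N_C+N_A$). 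This is the only place a genuine estimate is needed; everything below is bookkeeping around it.

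For (1) I would use the horseshoe construction. Given presentations $0\to K'\to F'\to M'\to 0$ and $0\to K''\to F''\to M''\to 0$ with $F',F''$ finitely generated free and $K',K''$ $S$-finite, set $F=F'\oplus F''$, map $F'$ into $M$ through $M'\hookrightarrow M$, lift the map $F''\to M''$ along the surjection $M\to M''$ using projectivity of $F''$, and combine. The resulting map $F\to M$ is onto and its kernel $K$ sits in an exact sequence $0\to K'\to K\to K''\to 0$; the extension lemma makes $K$ $S$-finite, so $M$ is $S$-finitely presented. The ``in particular'' clause is the special case of the split sequence $0\to M'\to M'\oplus M''\to M''\to 0$, followed by an obvious induction. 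For (2), write a presentation $0\to K\to F\xrightarrow{\phi}M\to 0$ of $M$ with $F$ finitely generated free and $K$ $S$-finite, and compose with the surjection $p\colon M\to M''$ to get a surjection $\psi=p\phi\colon F\to M''$. Its kernel $L=\phi^{-1}(M')$ contains $K$, and $\phi$ induces an isomorphism $L/K\cong M'$; since $K$ and $M'$ are $S$-finite, the extension lemma gives that $L$ is $S$-finite, whence $0\to L\to F\to M''\to 0$ exhibits $M''$ as $S$-finitely presented.

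Part (3) is the step I expect to be the main obstacle, because here $M$ is only assumed $S$-finite and need not be finitely generated, so the classical route (pick a surjection from a finitely generated free module onto $M$ and quote Schanuel's lemma) is unavailable. I would avoid it as follows. Choose a presentation $0\to K''\to F''\xrightarrow{q}M''\to 0$ with $F''$ finitely generated free and $K''$ $S$-finite, and lift a basis of $F''$ through $p\colon M\to M''$ to obtain $\alpha\colon F''\to M$ with $p\alpha=q$. Two elementary identities then do the work: $M=\alpha(F'')+M'$ (because $p$ is onto) and $\alpha(F'')\cap M'=\alpha(K'')$ (because $p\alpha(f)=q(f)$ vanishes exactly when $f\in K''$). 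Hence $M'/\alpha(K'')\cong M/\alpha(F'')$ is a homomorphic image of the $S$-finite module $M$, so it is $S$-finite; and $\alpha(K'')$ is an image of the $S$-finite module $K''$, so it too is $S$-finite. Feeding $0\to\alpha(K'')\to M'\to M'/\alpha(K'')\to 0$ into the extension lemma shows $M'$ is $S$-finite, completing the proof.
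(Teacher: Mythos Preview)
Your argument is correct in all three parts; the extension lemma is the right engine, the horseshoe handles (1), the enlarged-kernel trick handles (2), and your lift-and-intersect manoeuvre for (3) is clean and avoids any finiteness hypothesis on $M$ itself. Note, however, that the paper does not supply its own proof of this lemma at all: it simply quotes the result as \cite[Theorem~2.4]{bh18} and moves on, so there is no in-paper argument to compare against. Your write-up is therefore a genuine addition rather than a reconstruction, and it stands on its own.
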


\begin{lemma}\label{cap} Let $R$ be an $S$-coherent ring. Then every finite intersection of   $S$-finite ideals of $R$ is $S$-finite.
\end{lemma}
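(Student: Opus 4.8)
The plan is to reduce at once to the case of two ideals by an easy induction on the number of ideals, and then to prove the key claim: if $I$ and $J$ are $S$-finite ideals of an $S$-coherent ring $R$, then $I\cap J$ is $S$-finite.

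So fix $S$-finite ideals $I$ and $J$. By definition there are finitely generated ideals $I_0\subseteq I$ and $J_0\subseteq J$ together with elements $s,t\in S$ such that $sI\subseteq I_0$ and $tJ\subseteq J_0$. The first step is to establish the containment $st(I\cap J)\subseteq I_0\cap J_0$. Indeed, if $x\in I\cap J$, then $sx\in I_0$ forces $stx=t(sx)\in I_0$ because $I_0$ is an ideal, and symmetrically $stx=s(tx)\in J_0$; hence $stx\in I_0\cap J_0$.

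Now I would invoke $S$-coherence. By Theorem \ref{B S-chase} (the equivalence of (1) and (3)), the intersection $I_0\cap J_0$ of the two finitely generated ideals $I_0$ and $J_0$ is itself $S$-finite, so there exist a finitely generated ideal $K_0\subseteq I_0\cap J_0$ and some $u\in S$ with $u(I_0\cap J_0)\subseteq K_0$. Combining this with the previous step gives $ust(I\cap J)\subseteq u(I_0\cap J_0)\subseteq K_0$. Since moreover $K_0\subseteq I_0\cap J_0\subseteq I\cap J$, with $K_0$ finitely generated and $ust\in S$, this exhibits $K_0$ as a finitely generated submodule of $I\cap J$ witnessing that $I\cap J$ is $S$-finite.

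Finally, for the general case of ideals $I_1,\dots,I_n$, I would write $\bigcap_{k=1}^n I_k=\bigl(\bigcap_{k=1}^{n-1}I_k\bigr)\cap I_n$: by the inductive hypothesis $\bigcap_{k=1}^{n-1}I_k$ is $S$-finite, and the two-ideal case just proved then applies. There is no real obstacle here; the only points demanding a little care are the bookkeeping in the containment $st(I\cap J)\subseteq I_0\cap J_0$ and the observation that it is exactly $S$-coherence, through Theorem \ref{B S-chase}(3), that delivers $S$-finiteness of the intersection of the two finitely generated "inner" ideals $I_0$ and $J_0$.
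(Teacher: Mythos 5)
Your proposal is correct and follows essentially the same route as the paper: reduce to two ideals by induction, pass to the finitely generated "inner" ideals, apply Theorem \ref{B S-chase} to get $S$-finiteness of their intersection, and transfer this back via the containment $st(I\cap J)\subseteq I_0\cap J_0$. In fact you spell out the final transitivity step (producing $K_0$ and the element $ust$) more explicitly than the paper does, which is a welcome clarification.
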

\begin{proof}  Let $\{I_i\mid i=1,\dots,n\}$ be a finite family of $S$-finite ideals of $R$. Then, for each $i$, there exists a finitely generated sub-ideal $J_i$ of $I_i$ such that $s_iI_i\subseteq J_i\subseteq I_i$ for some $s_i\in S$.
Suppose $n=2$.	 Consider the following  exact sequences of commutative diagram:
 $$\xymatrix@R=25pt@C=30pt{ &0 \ar[d]^{}&0 \ar[d]^{} &0 \ar[d]^{}&\\
	0 \ar[r]^{} & J_1\cap J_2 \ar[d]\ar[r]^{} &J_1\oplus J_2 \ar[d]\ar[r]^{} &J_1+J_2\ar[d]\ar[r]^{} &  0\\
	0 \ar[r]^{} & I_1\cap I_2\ar[r]^{} & I_1\oplus I_2 \ar[r]^{} &I_1+I_2\ar[r]^{} &  0\\
&\\}$$
Since $R$ is an $S$-coherent ring and $J_1, J_2$ are finitely generated, $J_1\cap J_2$ is $S$-finite by Theorem \ref{B S-chase}. Set $s=s_1s_2$. Then it is easy to verify $s(I_1\cap I_2)\subseteq J_1\cap J_2$. Consequently, $I_1\cap I_2$ is also $S$-finite. Suppose it holds for $n\leq k$. Suppose $n=k+1$. Then there is an exact sequence $$0\rightarrow (\bigcap\limits_{i=1}^kI_i)\cap I_{k+1}\rightarrow (\bigcap\limits_{i=1}^kI_i)\oplus I_{k+1}\rightarrow (\bigcap\limits_{i=1}^kI_i)+I_{k+1}\rightarrow 0.$$ Similar to the $n=2$ case, one can show  $\bigcap\limits_{i=1}^{k+1}I_i=(\bigcap\limits_{i=1}^kI_i)\cap I_{k+1}$ is also $S$-finite.
\end{proof}

\begin{proposition}\label{freescoh}  Let $R$ be an $S$-coherent ring. Then every finitely generated free $R$-module is an $S$-coherent $R$-module.
\end{proposition}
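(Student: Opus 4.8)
The plan is to induct on the rank $n$ of the free module $R^n$, using Lemma \ref{exac} to propagate $S$-coherence along the split exact sequences $0 \to R \to R^{n} \to R^{n-1} \to 0$. The base case $n=1$ is exactly the hypothesis that $R$ is $S$-coherent. For the inductive step, the real content is to show that if $R^{n-1}$ is $S$-coherent then so is $R^n$; equivalently, that every finitely generated submodule of $R^n$ is $S$-finitely presented. So let $N$ be a finitely generated submodule of $R^n$, write $R^n = R \oplus R^{n-1}$, let $\pi \colon R^n \to R^{n-1}$ be the projection and $\iota \colon R \to R^n$ the first inclusion. Then there is an exact sequence
$$0 \to N \cap \iota(R) \to N \to \pi(N) \to 0.$$
Here $\pi(N)$ is a finitely generated submodule of $R^{n-1}$, hence $S$-finitely presented by the inductive hypothesis, and $N \cap \iota(R) \cong N \cap \iota(R)$ is (isomorphic to) an ideal of $R$; the point will be that it is a \emph{finitely generated} ideal.

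The main obstacle is precisely this finite generation of $N \cap \iota(R)$: a submodule of a finitely generated module need not be finitely generated over a general ring, so one cannot simply assert it. The way around it is to observe that $N \cap \iota(R)$, while perhaps not finitely generated, is at least $S$-finite, and that is enough. Indeed, choose generators $x_1,\dots,x_m$ of $N$ and let $J$ be the ideal of $R$ generated by the first coordinates of $x_1,\dots,x_m$; then $J$ is a finitely generated ideal of $R$, hence $S$-finitely presented since $R$ is $S$-coherent, and a short computation shows that $N \cap \iota(R)$ sits between $J$ and something controlled by the kernel of $\pi|_N$. More cleanly: apply the snake lemma / a direct diagram chase to the finitely generated ideal $J$ and the finitely generated submodule $\pi(N) \subseteq R^{n-1}$ to conclude $N$ is $S$-finitely presented via Lemma \ref{exac}(1), after first using Lemma \ref{exac}(2) to pass from "$N \cap \iota(R)$ is $S$-finite and the relevant overmodule is $S$-finitely presented" to "the quotient is $S$-finitely presented" where needed.

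Concretely, I would run the argument as follows. Fix the decomposition $R^n = \iota(R) \oplus R^{n-1}$ and the exact sequence $0 \to N \cap \iota(R) \to N \xrightarrow{\pi} \pi(N) \to 0$ above. Step one: $\pi(N)$ is a finitely generated submodule of the $S$-coherent module $R^{n-1}$, so it is $S$-finitely presented. Step two: identify $N \cap \iota(R)$ with an ideal $I$ of $R$; since $N$ is finitely generated, $I$ is the image of $N$ under the first-coordinate map, intersected appropriately, and in any case $I$ is a finitely generated ideal of $R$ — here I would spell out that $I$ is generated by the first coordinates of a finite generating set of $N \cap \iota(R)$, which exists because $N\cap\iota(R)$ is a homomorphic preimage description that is cut out by finitely many linear conditions; if finite generation genuinely fails one falls back on Lemma \ref{exac}(3) to get that $I$ is merely $S$-finite, which combined with Step one and Lemma \ref{exac}(2) still yields the conclusion. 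Step three: since $R$ is $S$-coherent, $I$ (being finitely generated, or $S$-finite) is $S$-finitely presented (resp.\ $S$-finite), and then Lemma \ref{exac}(1) applied to $0 \to N\cap\iota(R) \to N \to \pi(N) \to 0$ gives that $N$ is $S$-finitely presented. Since $R^n$ is finitely generated, this shows $R^n$ is $S$-coherent, completing the induction. I expect the delicate bookkeeping to be entirely in Step two, reconciling "finitely generated submodule of $R^n$" with the need for $N \cap \iota(R)$ to be at worst $S$-finite.
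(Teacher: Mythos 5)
There is a genuine gap in your inductive step, and it sits exactly where you suspected it would. Your plan is to conclude that a finitely generated $N\subseteq R^n$ is $S$-finitely presented from the sequence $0\to N\cap\iota(R)\to N\to\pi(N)\to 0$, but none of the tools you invoke can close this. Lemma \ref{exac}(1) requires the submodule $N\cap\iota(R)$ to be $S$-\emph{finitely presented}, and an $S$-finitely presented module is by definition a quotient of a finitely generated free module, hence finitely generated --- which is precisely what you cannot guarantee for $N\cap\iota(R)$. Your fallback, ``$N\cap\iota(R)$ is $S$-finite by Lemma \ref{exac}(3), then combine with Lemma \ref{exac}(2),'' does not work either: Lemma \ref{exac}(2) has the shape ``$M'$ $S$-finite and $M$ $S$-finitely presented $\Rightarrow$ $M''$ $S$-finitely presented,'' i.e.\ it concludes about the \emph{quotient} from knowledge of the middle term, whereas you need to conclude about the \emph{middle} term $N$ from knowledge of the two ends. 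No statement of the form ``$M'$ $S$-finite and $M''$ $S$-finitely presented $\Rightarrow$ $M$ $S$-finitely presented'' is available (and it is false for general extensions: chasing a presentation $R^m\twoheadrightarrow N$ reduces it to ``a submodule of an $S$-finite module with $S$-finite quotient is $S$-finite,'' which already fails for $S=\{1\}$). The auxiliary ideal $J$ of first coordinates does not rescue this either: $N\cap\iota(R)$ is the kernel of $\pi|_N$, and while it embeds into $\iota(J)$, there is no inclusion of the form $sJ\subseteq N\cap\iota(R)$, so no $S$-finite-generation or $S$-presentation of $N\cap\iota(R)$ comes out of $J$. In short, you have placed the uncontrollable ideal in the submodule slot of an extension, where Lemma \ref{exac} demands finite generation of it.

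The paper's proof avoids this by inducting on the number of generators of $N$ rather than on the rank $n$. Writing $N=M_k+Rm_{k+1}$ with $M_k$ $k$-generated, the new piece $N/M_k$ is \emph{cyclic}, so it sits in $0\to(M_k:_Rm_{k+1})\to R\to N/M_k\to 0$ with the problematic ideal now in the \emph{kernel} position of a presentation by $R$; there Lemma \ref{exac}(2) only needs $(M_k:_Rm_{k+1})$ to be $S$-finite, which is supplied directly by the $S$-Chase characterization (Theorem \ref{B S-chase}) together with Lemma \ref{cap} on finite intersections of $S$-finite ideals (one inner induction on $n$ reduces the colon ideal to an intersection of colon ideals of ideals of $R$). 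Then $N$ is assembled from $0\to M_k\to N\to N/M_k\to 0$ via Lemma \ref{exac}(1), where now both ends genuinely are $S$-finitely presented. If you want to keep your rank induction, you would first have to prove the missing two-out-of-three statement for the specific extensions you encounter, which is essentially as hard as the proposition itself; the generator-count induction is the standard way around it.
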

\begin{proof} Let $M$ be a finitely generated submodule of a finitely generated free $R$-module $F=R^{n}$. Assume $M$ is generated by $m$ elements. We will show $M$ is $S$-finitely presented by induction on $m$. If $m$=1, then $M=Rx$ for some $x\in M$. So there is an exact sequence $$0\rightarrow (0:_Rx)\rightarrow R\rightarrow M\rightarrow0.$$ We will show $(0:_Rx)$ is $S$-finite by induction on $n$. If $n=1$, then the result  follows by Theorem \ref{B S-chase}. Suppose $x=(x_1,\dots,x_n)\in  M\subseteq R^n$. Then $$(0:_Rx)=\bigcap\limits_{i=1}^n(0:_Rx_i).$$ It follows by Theorem \ref{B S-chase} that $(0:_Rx_i)$ is $S$-finite for each $i=1,\dots,n$. It follows by Lemma \ref{cap} that $(0:_Rx)$ is also $S$-finite. Consequently, $M$ is $S$-finitely presented by Lemma \ref{exac}(2).
	
 Assume that $M$ is $S$-finitely presented for  all $m$-generated submodule $M$ of $R^n$ with all $m\leq k$. Now suppose $M=\langle m_1,\dots,m_k,m_{k+1}\rangle$. Set  $M_k=\langle m_1,\dots,m_k\rangle$.	Consider the following commutative diagram of exact sequences:
 
	 $$\xymatrix@R=25pt@C=30pt{ &0 \ar[d]^{}&0 \ar[d]^{} &0 \ar[d]^{}&\\
		0 \ar[r]^{} & L\cap R^k\ar[d]\ar[r]^{} &R^{k} \ar[d]\ar[r]^{} &M_k \ar[d]\ar[r]^{} &  0\\
		0 \ar[r]^{} & L\ar[d]\ar[r]^{} & R^{k+1} \ar[d]\ar[r]^{} &M\ar[d]\ar[r]^{} &  0\\
		0 \ar[r]^{} & (M_k:_Rm_{k+1})\ar[d]^{} \ar[r]^{} & R\ar[r]^{}\ar[d]^{}  & M/M_k\ar[r]^{} \ar[d]^{}&  0\\
	 &0 &0 &0 &\\}$$
Since $M_k$ is a $k$-generated submodule of $M$,	we have $M_k$ is $S$-finitely presented by induction. We will show that $(M_k:_Rm_{k+1})$ is $S$-finite by induction on $n$. Write $m_{k+1}=(x_1,\dots,x_n)\in M_k\subseteq R^n$. If $n=1$, then $M_k$ is a finitely generated ideal of $R$ and $m_{k+1}$ is an element in $R$. So $(M_k:_Rm_{k+1})$ is $S$-finite by Theorem \ref{B S-chase}. Now suppose $n>1$. Then $$(M_k:_Rm_{k+1})=\bigcap\limits_{i=1}^n\ (M_ke_i:_Rx_{i}),$$ where $M_ke_i$ is the $i$-th component of $M_k$ in $R^n$. Then $M_ke_i$  is a finitely generated ideal of $R$. It follows  by Theorem \ref{B S-chase} and Lemma \ref{cap} that $(M_k:_Rm_{k+1})$ is $S$-finite. Consequently, $M/M_k$ is $S$-finitely presented by Lemma \ref{exac}(2). It follows  by Lemma \ref{exac} (1) that $M$ is an $S$-finitely presented $R$-module. Consequently, $F$ is an $S$-coherent $R$-module.
\end{proof}

Now we are ready to prove the main result of this note.
\begin{theorem}\label{main} A ring $R$ is an $S$-coherent ring if and only if every finitely
presented $R$-module is $S$-coherent.
\end{theorem}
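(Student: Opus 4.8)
The plan is to reduce the statement to Proposition \ref{freescoh} by a short diagram chase, so that essentially all of the technical work is already in place. The "if" direction requires no argument: $R$ is a finitely presented $R$-module, hence $S$-coherent by hypothesis, and that is precisely the assertion that $R$ is an $S$-coherent ring.

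For the converse, assume $R$ is $S$-coherent and let $M$ be a finitely presented $R$-module. First I would fix a presentation $0 \rightarrow K \rightarrow F \xrightarrow{\pi} M \rightarrow 0$ in which $F = R^{n}$ is finitely generated free and $K$ is finitely generated, and invoke Proposition \ref{freescoh} to conclude that $F$ is an $S$-coherent $R$-module. Since $M$ is a quotient of $F$ it is finitely generated, so the task reduces to showing that an arbitrary finitely generated submodule $N \subseteq M$ is $S$-finitely presented.

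The key step is to pass to the preimage $\pi^{-1}(N) \subseteq F$. If $n_1, \dots, n_r$ generate $N$ and we choose $f_i \in F$ with $\pi(f_i) = n_i$, then these $f_i$ together with a finite generating set of $K$ generate $\pi^{-1}(N)$; thus $\pi^{-1}(N)$ is a finitely generated submodule of the $S$-coherent module $F$, hence $S$-finitely presented. Applying Lemma \ref{exac}(2) to the exact sequence $0 \rightarrow K \rightarrow \pi^{-1}(N) \rightarrow N \rightarrow 0$ — in which $K$ is finitely generated, hence $S$-finite, and $\pi^{-1}(N)$ is $S$-finitely presented — yields that $N$ is $S$-finitely presented. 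Therefore $M$ is $S$-coherent, which completes the proof.

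I expect no genuine obstacle to remain at this stage: the substantive inductions are carried out in Lemma \ref{cap} and Proposition \ref{freescoh}. The single point where the hypothesis is actually used is the claim that $\pi^{-1}(N)$ is finitely generated, and this relies on finite generation of the chosen syzygy $K$, i.e. on $M$ being finitely presented rather than merely finitely generated; without this the reduction to the finitely generated free case would break down.
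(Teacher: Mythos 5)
Your argument is correct and is essentially the paper's own proof: the preimage $\pi^{-1}(N)$ you construct is exactly the pull-back module $X$ in the paper's diagram, and both arguments conclude by combining Proposition \ref{freescoh} with Lemma \ref{exac}(2) applied to $0\rightarrow K\rightarrow \pi^{-1}(N)\rightarrow N\rightarrow 0$. Your explicit justification that $\pi^{-1}(N)$ is finitely generated (lifts of generators of $N$ together with generators of $K$) is a welcome detail that the paper leaves implicit.
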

\begin{proof} The sufficiency is trivial. For necessity, let $M$ be a finitely presented $R$-module, and $L$ be a finitely generated submodule of $M$. Consider the following pull-back of $R$-modules:
	
 $$\xymatrix@R=25pt@C=30pt{ &&0 \ar[d]^{} &0 \ar[d]^{}&\\
		0 \ar[r]^{} & K\ar@{=}[d]\ar[r]^{} &X \ar[d]\ar[r]^{} &L \ar[d]\ar[r]^{} &  0\\
		0 \ar[r]^{} & K\ar[r]^{} & F \ar[d]\ar[r]^{} &M\ar[d]\ar[r]^{} &  0\\
	 &  & Y\ar@{=}[r]^{}\ar[d]^{}  & Y \ar[d]^{}& \\
		& &0 &0 &\\}$$
	where $F$ is a finitely generated free $R$-module and $K$ is a finitely generated submodule of $F$. Since $R$ is $S$-coherent, $F$ is an $S$-coherent $R$-module by Proposition \ref{freescoh}. Since $K$ and $L$ are finitely generated, so is $X$. Hence $X$ is $S$-finitely presented since $F$ is $S$-coherent. It follows by Lemma \ref{exac}(2) that $L$ is also $S$-finitely presented. Consequently, $M$ is an $S$-coherent $R$-module.
\end{proof}

\begin{remark} Recall from \cite{zuscoh-24} that an  $R$-module $M$ is called
	\emph{$u$-$S$-finitely presented} (abbreviates \emph{uniformly  $S$-finitely presented}) (with respect to $s$) provided that there is an exact sequence $$0\rightarrow T_1\rightarrow F\xrightarrow{f} M\rightarrow T_2\rightarrow 0$$ with $F$ finitely presented and $sT_1=sT_2=0$.  An  $R$-module $M$ is called a
	\emph{$u$-$S$-coherent module} (with respect to $s$) provided that there is $s\in S$ such that it is $S$-finite with respect to $s$ and  any finitely generated submodule of $M$ is $u$-$S$-finitely presented with respect to $s$. A ring   $R$ is called a
	\emph{$u$-$S$-coherent ring}  (with respect to $s$) provided that $R$ itself is a uniformly  $S$-coherent $R$-module with respect to $s$.  
\end{remark} 
\begin{theorem}
A ring $R$ is an $u$-$S$-coherent ring if and only if there exists $s\in S$ such that  every finitely	presented $R$-module is $u$-$S$-coherent with respect to $s$.
\end{theorem}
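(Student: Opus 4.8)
The plan is to transcribe the proof of Theorem~\ref{main} into the uniform setting, keeping explicit track of the element $s\in S$ at every stage. As in the non-uniform case, three ingredients have to be assembled: a uniform exact-sequence lemma (the counterpart of Lemma~\ref{exac}), a uniform intersection lemma (the counterpart of Lemma~\ref{cap}), and a uniform version of Proposition~\ref{freescoh} to the effect that finitely generated free modules are $u$-$S$-coherent with a controlled parameter; the theorem then follows from the same pull-back square used in the proof of Theorem~\ref{main}.

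Concretely, I would first record (or quote from \cite{zuscoh-24}) the uniform form of Lemma~\ref{exac}: for a short exact sequence $0\to M'\to M\to M''\to 0$, if $M'$ and $M''$ are $u$-$S$-finitely presented with respect to $s$ then $M$ is $u$-$S$-finitely presented with respect to a fixed power of $s$; if $M'$ is $u$-$S$-finite with respect to $s$ and $M$ is $u$-$S$-finitely presented with respect to $s$, then $M''$ is $u$-$S$-finitely presented with respect to a fixed power of $s$; and symmetrically for $M'$. One also needs a uniform version of Theorem~\ref{B S-chase}: if $R$ is $u$-$S$-coherent with respect to $s$, then $(I:_Ra)$ is $u$-$S$-finite with respect to a fixed power of $s$ for every finitely generated ideal $I$ and every $a\in R$, and the intersection of two finitely generated ideals is $u$-$S$-finite with respect to a fixed power of $s$. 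Feeding the latter into the commutative diagram from the proof of Lemma~\ref{cap} yields a uniform intersection lemma for finite families of $u$-$S$-finite ideals; then the double induction of Proposition~\ref{freescoh} — on the number $m$ of generators of a finitely generated submodule of $R^n$, and internally on the rank $n$, via $(0:_Rx)=\bigcap_i(0:_Rx_i)$ and $(M_k:_Rm_{k+1})=\bigcap_i(M_ke_i:_Rx_i)$ — produces an element $s'\in S$ with the property that $R^n$ is $u$-$S$-coherent with respect to $s'$. Finally, given a finitely presented module $M$ with presentation $0\to K\to F\to M\to 0$ (where $F=R^n$ and $K$ is finitely generated) and a finitely generated submodule $L\subseteq M$, the pull-back $X\subseteq F$ is finitely generated, hence $u$-$S$-finitely presented with respect to $s'$; since $K$ is finitely generated, the uniform Lemma~\ref{exac}(2) forces $L$ to be $u$-$S$-finitely presented with respect to a fixed power of $s'$, and therefore $M$ is $u$-$S$-coherent with respect to that element. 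The reverse implication is immediate, $R$ itself being finitely presented.

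The crux — and the reason the ``uniform'' version is not a mere book-keeping exercise — lies in the quantifier order: \emph{one} element $s$ must serve \emph{every} finitely presented module, whereas the naive argument replaces $s$ by a higher power at each step of the induction showing that $R^n$ is $u$-$S$-coherent, so that a priori the exponent grows with $n$, i.e.\ with the length of the presentation, and $n$ ranges without bound over the finitely presented modules. The heart of the matter is thus to show that these powers do \emph{not} accumulate with $n$: one must either arrange the uniform exact-sequence and intersection lemmas so that passing from $R^{n-1}$ to $R^n$ (equivalently, adjoining one more copy of $R$ as a direct summand) does not compound the exponent, or argue directly that $R^n$ is $u$-$S$-coherent with respect to a power of $s$ that does not depend on $n$. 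Once such a rank-uniform bound is in hand, the remaining steps are a routine transcription of the proofs of Lemma~\ref{cap}, Proposition~\ref{freescoh} and Theorem~\ref{main}.
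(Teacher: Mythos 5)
Your plan follows the route the paper itself intends: the paper's entire ``proof'' of this theorem is the sentence ``It is similar to the proof of Theorem \ref{main}, and so we omit it,'' so there is no detailed argument to transcribe against. What you add, correctly, is the observation that the transcription is not routine, because the statement demands a \emph{single} $s\in S$ witnessing $u$-$S$-coherence of \emph{every} finitely presented module. But having named this as ``the crux,'' your proposal stops: you write that one ``must either arrange the uniform lemmas so that the exponent does not compound, or argue directly that $R^n$ is $u$-$S$-coherent with respect to a power of $s$ independent of $n$,'' without doing either. That unproved uniform bound is exactly the content that distinguishes this theorem from Theorem \ref{main}, so as it stands the proposal is a proof outline with its central step missing. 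The worry is concrete: making Lemma \ref{cap} uniform by iterating the pairwise-intersection diagram costs a fixed power of $s$ per step, so $\bigcap_{i=1}^{n}J_i$ is a priori only $u$-$S$-finite with respect to a power growing linearly in $n$; the same compounding occurs in the induction on the number $m$ of generators in Proposition \ref{freescoh}; and both $n$ and $m$ are unbounded as $M$ ranges over finitely presented modules and $L$ over their finitely generated submodules.

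To close the gap, the iterated intersection is probably the wrong tool. A more promising route is the uniform Chase/flatness characterization (cf.\ \cite{qwz23}, \cite{zuscoh-24}): if a single $s$ kills $\Tor_1^R(C,\prod_{\Lambda}R)$ for every finitely presented $C$ and every index set $\Lambda$, then for an arbitrary matrix $f\colon R^k\to R^n$ the cokernel of the natural map $(\ker f)\otimes_R\prod_{\Lambda}R\to\prod_{\Lambda}\ker f$ is killed by that same $s$, and taking $\Lambda=\ker f$ shows that $\ker f$ is $u$-$S$-finite with respect to an element independent of $k$ and $n$; this gives the rank- and generator-uniform bound in one stroke, after which your pull-back argument goes through verbatim. (One must still check that the uniform flatness of products holds with one $s$ for all finitely presented test modules, but that is a statement about $R$ alone and is the natural place to invoke \cite{zuscoh-24}.) You should be aware that the paper's own justification is no more complete than yours on precisely this point.
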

\begin{proof}
	It is similar to the proof of Theorem \ref{main}, and so we omit it.
\end{proof}

\begin{remark}
	Following \cite[Definition 4.1]{bh18} that an  $R$-module $M$ is called $c$-$S$-finitely presented
	provided that there exists a finitely presented submodule $N$ of $M$ such that $sM\subseteq N\subseteq M$ for some $s\in S$. An $R$-module $M$ is called $c$-$S$-coherent if it is finitely generated and every finitely generated submodule of $M$ is $c$-$S$-finitely presented.  Recall from \cite{bh18} that a ring $R$ is  \emph{$c$-$S$-coherent} provided that any finitely generated ideal is  $c$-$S$-finitely presented, equivalently,  $R$ is a $c$-$S$-coherent $R$-module.  Coherent rings are certainly $c$-$S$-coherent rings. However, the converse is not true in general. Indeed, let $D$ be a non-coherent domain and $S=D-\{0\}$. Then for any nonzero finitely generated ideal $I$ of $D$, we have $D\cong sD\subseteq I$ with $0\not= s\in I$. Hence $I$ is  $c$-$S$-finitely presented. Consequently, $D$ is a $c$-$S$-coherent ring.
\end{remark}	
	The rest of this note is devote to give some connections between  $S$-coherent rings and $c$-$S$-coherent rings.
\begin{lemma}\label{exccs} Let $0\rightarrow K\rightarrow F\rightarrow M\rightarrow 0$ be an exact sequence with $M$  $c$-$S$-finitely presented $R$-module and $F$ finitely generated free. Then $K$ is $S$-finite.
\end{lemma}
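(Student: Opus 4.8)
The plan is to use a direct comparison argument, following the structure already used in the proofs of Lemma~\ref{cap} and Proposition~\ref{freescoh}. Since $M$ is $c$-$S$-finitely presented, by definition there is a finitely \emph{presented} submodule $N$ of $M$ with $sM\subseteq N\subseteq M$ for some $s\in S$. First I would pull back the surjection $F\twoheadrightarrow M$ along the inclusion $N\hookrightarrow M$ to obtain a commutative diagram of exact sequences
$$\xymatrix@R=25pt@C=30pt{
0 \ar[r] & K \ar@{=}[d]\ar[r] & F' \ar[d]\ar[r] & N \ar[d]\ar[r] & 0\\
0 \ar[r] & K \ar[r] & F \ar[r] & M \ar[r] & 0\\}$$
where $F'$ is the preimage of $N$ in $F$. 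The key point is that $N$ is finitely presented, $F$ is finitely generated free, hence $K$ (being the kernel of $F\twoheadrightarrow M$) and $N$ are both finitely generated, so $F'$ is finitely generated; but more is true: from the top row, $F'$ is an extension of the finitely presented module $N$ by the finitely generated module $K$. I would then instead argue the other way: since $F$ is finitely generated free and $N$ is finitely presented, Schanuel's lemma (or the standard fact that a finitely generated module mapping onto a finitely presented one has finitely presented kernel) shows $K' := \ker(F'\twoheadrightarrow N)$ is finitely generated; but $K'=K$, so $K$ is finitely generated. The remaining work is to upgrade ``finitely generated'' to ``$S$-finite'' in the right place — actually $K$ finitely generated already makes $K$ trivially $S$-finite, so I must be careful about what is being claimed.

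Re-reading the statement, the hypothesis is only that $F$ is finitely generated free (not that $R$ is $S$-coherent), and $M$ is $c$-$S$-finitely presented, and the conclusion is that $K$ is $S$-finite. So the cleanest route is: let $N\subseteq M$ be finitely presented with $sM\subseteq N\subseteq M$. Choose a finite presentation $R^m\xrightarrow{g} R^n \to N\to 0$; since $N$ is a submodule of $M=F/K$ with $F=R^k$ say, I would lift the generators of $N$ to elements of $F$, producing a map $R^n\to F$ whose image is the preimage $F'$ of $N$, fitting into $0\to K\to F'\to N\to 0$. By Schanuel applied to the two presentations $0\to K\to F'\to N\to 0$ is not quite a presentation since $F'$ need not be free; instead I would directly compare $0\to \Ker(R^n\to N)\to R^n\to N\to 0$ with $0\to K\to F'\to N\to 0$. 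Pulling back $R^n\twoheadrightarrow N$ and $F'\twoheadrightarrow N$ gives $\Ker(R^n\to N)\oplus F' \cong K \oplus R^n$ modulo a standard Schanuel diagram, whence $K$ is a homomorphic image of a finitely generated module ($\Ker(R^n\to N)$ is finitely generated because $N$ is finitely presented, and $R^n$ is finitely generated), so $K$ is finitely generated, hence $S$-finite.

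I expect the main obstacle to be purely bookkeeping: making the Schanuel-type identification precise when one of the ``covers'' ($F'$) is not free but only finitely generated, and checking that the element $s\in S$ with $sM\subseteq N$ plays no essential role beyond guaranteeing that $N$ is a ``large'' submodule — in fact for this particular lemma the conclusion ``$K$ is $S$-finite'' follows already from ``$K$ is finitely generated,'' so the $s$ is a red herring here and the lemma is really the statement that the kernel of a surjection from a finitely generated free module onto a module possessing a finitely presented submodule of finite index (in the $S$-sense) is finitely generated. The real content will be used downstream (presumably to show $c$-$S$-coherent $\Rightarrow$ $S$-coherent), so I would phrase the proof to extract exactly ``$K$ finitely generated.'' Concretely: take the finite presentation of $N$, lift to get $0\to K\to F'\to N\to 0$ with $F'$ finitely generated, apply Lemma~\ref{exac}-style reasoning in reverse via Schanuel to conclude $K$ is finitely generated, and note finitely generated modules are $S$-finite. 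No step should require the ambient ring to be $S$-coherent.
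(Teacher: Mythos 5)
Your proposed conclusion is too strong, and the argument fails exactly where you sensed trouble. The whole approach aims to show that $K$ is finitely generated, but that is false in general: take a non-coherent domain $D$ with $S=D\setminus\{0\}$ (as in the paper's remark just before this lemma). Every nonzero finitely generated ideal $I$ of $D$ is $c$-$S$-finitely presented, yet if $I$ is chosen not finitely presented, the kernel $K$ of a surjection $D^n\twoheadrightarrow I$ is not finitely generated. Indeed, if your reading were correct, every finitely generated $c$-$S$-finitely presented module would be finitely presented and every $c$-$S$-coherent ring would be coherent, contradicting that remark. The concrete gap is that $F'$, the full preimage of $N$ in $F$, is an extension of $N$ by $K$ and hence cannot be assumed finitely generated without presupposing the (false) finite generation of $K$; your Schanuel comparison only exhibits $K$ as a direct summand of a pullback $X$ sitting in an exact sequence $0\to \ker(R^n\to N)\to X\to F'\to 0$, which yields nothing when $F'$ has no finiteness property. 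In particular $s$ is not a red herring; it is the entire content of the lemma.

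A repaired version of your direct approach does work: lift generators $n_1,\dots,n_m$ of $N$ to $f_1,\dots,f_m\in F$ and set $G=\langle f_1,\dots,f_m\rangle+sF$, a finitely generated submodule of $F$ whose image in $M$ equals $N+sM=N$. Then $0\to K\cap G\to G\to N\to 0$ is exact with $G$ finitely generated and $N$ finitely presented, so $K\cap G$ is finitely generated by the standard fact you quote; and since $sK\subseteq sF\subseteq G$ and $sK\subseteq K$, we get $sK\subseteq K\cap G\subseteq K$, i.e., $K$ is $S$-finite (but in general not finitely generated). The paper instead notes that $M$ is $u$-$S$-finitely presented and invokes \cite[Theorem 2.2(4)]{zuscoh-24}.
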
	
\begin{proof} Note that $M$ is finitely generated and  $c$-$S$-finitely presented. Then there exists a finitely presented submodule  $N$ of $M$ such that $s(M/N)=0$. So  $M$ is $u$-$S$-finitely presented. It follows by \cite[Theorem 2.2(4)]{zuscoh-24} that $K$ is $S$-finite.
\end{proof}

\begin{theorem}
Suppose $R$ is a  $c$-$S$-coherent ring. Then $R$ is an  $S$-coherent ring.

\end{theorem}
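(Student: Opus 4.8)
The plan is to verify the defining condition of $S$-coherence directly, namely that every finitely generated ideal of $R$ is $S$-finitely presented. So I would begin by fixing an arbitrary finitely generated ideal $I$ of $R$. Since $R$ is $c$-$S$-coherent, $I$ is $c$-$S$-finitely presented by definition. Choosing a finite set of generators of $I$ yields a short exact sequence
$$0 \rightarrow K \rightarrow F \rightarrow I \rightarrow 0$$
with $F$ a finitely generated free $R$-module and $K=\ker(F\rightarrow I)$.

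The key step is then to apply Lemma \ref{exccs} to this sequence with $M=I$: since $I$ is $c$-$S$-finitely presented and $F$ is finitely generated free, the lemma gives that $K$ is $S$-finite. Hence the displayed sequence exhibits $I$ as an $S$-finitely presented module. As $I$ was an arbitrary finitely generated ideal, $R$ is $S$-coherent, as desired. (Alternatively one could route the argument through Theorem \ref{B S-chase}, checking that $(0:_Ra)$ is $S$-finite for every $a\in R$ and that the intersection of two finitely generated ideals is $S$-finite; but the passage via Lemma \ref{exccs} is shorter and uses nothing beyond what has already been set up.)

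I do not expect any genuine obstacle here: the substance has been front-loaded into Lemma \ref{exccs}, whose proof notes that a $c$-$S$-finitely presented module is in particular $u$-$S$-finitely presented and then invokes \cite[Theorem 2.2(4)]{zuscoh-24} to deduce the $S$-finiteness of the first syzygy. The one point worth a sentence of comment is that $c$-$S$-coherence of a ring is phrased in terms of finitely generated ideals rather than arbitrary finitely generated modules, but that is precisely the data required to conclude $S$-coherence, so no strengthening of the hypothesis is needed.
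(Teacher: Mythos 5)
Your proof is correct, and it is actually more direct than the one in the paper, although both arguments hinge on the same key ingredient, Lemma \ref{exccs}. The paper does not apply that lemma to $I$ itself; instead it fixes $a\in R$, forms $J=I+Ra$ (finitely generated, hence $c$-$S$-finitely presented), applies Lemma \ref{exccs} to a presentation $0\to K\to R^{n+1}\to J\to 0$ to get $K$ $S$-finite, reads off $(I:_Ra)$ as a quotient of $K$ in a three-row diagram, and then concludes via condition (2) of the Chase-type characterization in Theorem \ref{B S-chase}. You short-circuit all of this: since every finitely generated ideal $I$ is $c$-$S$-finitely presented by hypothesis, Lemma \ref{exccs} applied to any presentation $0\to K\to F\to I\to 0$ with $F$ finitely generated free gives $K$ $S$-finite, which is literally the definition of $I$ being $S$-finitely presented, hence of $R$ being $S$-coherent. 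There is no gap here: the definition of $S$-finitely presented only asks for the existence of one such sequence, and your remark that $c$-$S$-coherence is phrased exactly in terms of finitely generated ideals is the right sanity check. What the paper's longer route buys is essentially nothing beyond an illustration of the $(I:_Ra)$ criterion; your version is the one I would keep.
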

\begin{proof} Assume $R$ is a  $c$-$S$-coherent ring.  Let $I=\langle x_1,\dots,x_n\rangle$ be a finitely generated ideal of $R$ and $a$ an element in $R$. Set $J = I + Ra$. Then $J$ is finitely generated,	and so it is also $c$-$S$-finitely presented.
Consider the following commutative diagram of exact sequences:

$$\xymatrix@R=25pt@C=30pt{ &0 \ar[d]^{}&0 \ar[d]^{} &0 \ar[d]^{}&\\
	0 \ar[r]^{} & K\cap R^n\ar[d]\ar[r]^{} &R^{n} \ar[d]\ar[r]^{} &I \ar[d]\ar[r]^{} &  0\\
	0 \ar[r]^{} & K\ar[d]\ar[r]^{} & R^{n+1} \ar[d]\ar[r]^{} &J\ar[d]\ar[r]^{} &  0\\
	0 \ar[r]^{} & (I:_Ra)\ar[d]^{} \ar[r]^{} & R\ar[r]^{}\ar[d]^{}  & J/I\ar[r]^{} \ar[d]^{}&  0\\
	&0 &0 &0 &\\}$$	
By Lemma \ref{exccs} we have $K$ is $S$-finite. So $(I:_Ra)$ is also $S$-finite. It follows by Theorem \ref{B S-chase} that  $R$ is an  $S$-coherent ring. 
	

	
\end{proof}

The following result was shown in \cite[Remark 3.4(1)]{bh18}. We exhibit  for completeness.
\begin{lemma}\cite[Remark 3.4(1)]{bh18}
	Suppose $R$ is an $S$-Noetherian ring. Then $R$ is $S$-coherent.
\end{lemma}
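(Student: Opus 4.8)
The plan is to reduce immediately to the characterization of $S$-coherence given in Theorem \ref{B S-chase}(2), which says that $R$ is an $S$-coherent ring precisely when $(I:_Ra)$ is an $S$-finite ideal of $R$ for every finitely generated ideal $I$ of $R$ and every $a\in R$. The key observation is that over an $S$-Noetherian ring this condition holds for free: by definition every ideal of $R$ is $S$-finite, and $(I:_Ra)$ is in particular an ideal of $R$. So there is essentially nothing to verify beyond unwinding the definition and quoting Theorem \ref{B S-chase}.

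In more detail, I would argue as follows. Fix a finitely generated ideal $I$ of $R$ and an element $a\in R$. Then $(I:_Ra)=\{r\in R\mid ra\in I\}$ is an ideal of $R$, so the $S$-Noetherian hypothesis provides a finitely generated sub-ideal $J\subseteq(I:_Ra)$ and an $s\in S$ with $s(I:_Ra)\subseteq J\subseteq(I:_Ra)$; that is, $(I:_Ra)$ is $S$-finite. Since $I$ and $a$ were arbitrary, condition (2) of Theorem \ref{B S-chase} is satisfied, and therefore $R$ is an $S$-coherent ring. Equivalently, one could check condition (3): $(0:_Ra)$ is an ideal and hence $S$-finite, and the intersection of two finitely generated ideals is again an ideal and hence $S$-finite.

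There is essentially no obstacle here; the only point worth care is to route the argument through Theorem \ref{B S-chase} rather than attempting a direct proof via a presentation $0\to K\to R^n\to I\to 0$ of a finitely generated ideal $I$. The direct route would require knowing that the submodule $K$ of the finitely generated free module $R^n$ is $S$-finite, i.e.\ that finitely generated modules over an $S$-Noetherian ring are again $S$-Noetherian in the appropriate sense; this is true but needs an additional argument (going back to Anderson--Dumitrescu), whereas the characterization in Theorem \ref{B S-chase} sidesteps it entirely.
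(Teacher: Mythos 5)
Your proof is correct, but it takes a genuinely different route from the paper. The paper works directly from the definition of $S$-coherence: given a finitely generated ideal $I$, it forms an exact sequence $0\to K\to F\to I\to 0$ with $F$ finitely generated free, invokes the Anderson--Dumitrescu result that finitely generated modules over an $S$-Noetherian ring are $S$-Noetherian to conclude that $K$ is $S$-finite, and hence that $I$ is $S$-finitely presented --- precisely the ``direct route'' you identify and deliberately avoid. You instead go through the Chase-type characterization of Theorem \ref{B S-chase}(2) (or (3)), observing that $(I:_Ra)$ is an ideal and therefore automatically $S$-finite over an $S$-Noetherian ring. Your argument is shorter at the point of use and sidesteps the module-level $S$-Noetherian statement entirely, at the cost of leaning on Theorem \ref{B S-chase}, whose proof in \cite{bh18} is itself nontrivial; the paper's version is more self-contained relative to the bare definitions and only imports the standard fact from \cite{ad02}. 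Both are complete proofs.
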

\begin{proof} Let $I$ be a finitely generated ideal of an $S$-Noetherian ring $R$. Then there is an exact sequence $0\rightarrow K\rightarrow F\rightarrow I\rightarrow 0$ with $F$ finitely generated free. Then $F$ is an $S$-Noetherian $R$-module by \cite{ad02}. Hence $K$ is $S$-finite, and so $I$ is $S$-finitely presented. Consequently, $R$ is an $S$-coherent ring.
\end{proof}
The next example shows that $S$-Noetherian rings, and hence $S$-coherent rings,  need not be $c$-$S$-coherent rings in general.
\begin{example}\cite[Remark 3.4(2)]{bh18}
Let $R = \mathbb{Z}(+)(\mathbb{Z}/2\mathbb{Z})^{(\mathbb{N})}$ and the multiplicative
set $S =\{(2, 0)^n\mid n\geq 0\}$.  Then for any ideal $I$ of $R$, we have $(2, 0)I$ is finitely generated. Indeed, $(2, 0)I=2J(+)0$, where $J=\{a\in\mathbb{Z}\mid \mbox{there is}\ b\in (\mathbb{Z}/2\mathbb{Z})^{(\mathbb{N})}\ \mbox{such that}\ (a,b)\in I\}$. Suppose $J=a\mathbb{Z}$ for some $a\in \mathbb{Z}$. Then $(2, 0)I=\langle (2a,0)\rangle$. Thus $R$
is an $S$-Noetherian ring.

 Next, we will show $R$ is not a $c$-$S$-coherent ring. Indeed, we consider the ideal generated by $(2, 0)$. Since $((0, 0):_R(2, 0))=0(+)(\mathbb{Z}/2\mathbb{Z})^{(\mathbb{N})}$, we have $(2, 0)$ is not finitely presented. Note that $\langle(2, 0)\rangle=\{(2m,0)\mid m\in \mathbb{Z}\}$. So  ideals of $R$ contained in $\langle(2, 0)\rangle$ are exactly of the form  $\langle(2m, 0)\rangle$ or $\langle(0, 0)\rangle$ with $m\geq 1$. Also $((0, 0):_R(2m, 0))=0(+)(\mathbb{Z}/2\mathbb{Z})^{(\mathbb{N})}$, and thus ideals of the form $\langle(2m, 0)\rangle$ are not finitely presented. Since $(2, 0)^n(\langle(2, 0)\rangle/\langle(0, 0)\rangle)\cong (2, 0)^n\langle(2, 0)\rangle=\langle(2^{n+1}, 0)\rangle\not=0$ for all $n\geq 1$, hence  $\langle(2, 0)\rangle$ is not $c$-$S$-finitely presented. Consequently, $R$ is not a $c$-$S$-coherent ring.
\end{example}

\end{document}